\renewcommand*\subjclass[2][2010]{%
  \def\@subjclass{#2}%
  \@ifundefined{subjclassname@#1}{%
    \ClassWarning{\@classname}{Unknown edition (#1) of Mathematics
      Subject Classification; using '2010'.}%
  }{%
    \@xp\let\@xp\subjclassname\csname subjclassname@#1\endcsname
  }%
}
\newtheorem{theorem}{Theorem}[section]
\newtheorem{lemma}[theorem]{Lemma}
\newtheorem{conjecture}[theorem]{Conjecture}
\theoremstyle{definition}
\newtheorem{remark}[theorem]{Remark}
\renewcommand*\subjclass[2][2010]{%
  \def\@subjclass{#2}%
  \@ifundefined{subjclassname@#1}{%
    \ClassWarning{\@classname}{Unknown edition (#1) of Mathematics
      Subject Classification; using '1991'.}%
  }{%
    \@xp\let\@xp\subjclassname\csname subjclassname@#1\endcsname
  }%
}
\begin{document}
\title{Two curious  inequalities involving different means of two arguments}

\author{Romeo Me\v strovi\' c}
\address{Maritime Faculty Kotor, University of Montenegro, Dobrota 36,
85330 Kotor, Montenegro} \email{romeo@ac.me}

\author{Miomir Andji\'{c}}

\address{ Faculty for Information Technology\\  
University ``Mediterranean''\\ Vaka Djurovi\'{c}a bb\\ Podgorica\\ Montenegro}

\email{miomir.andjic@unimediteran.net}

 \subjclass{Primary 26D05, 26E60; Secondary 26D15}
\keywords{harmonic mean, geometric mean, arithmetic mean, quadratic mean,
$H-G-A-Q$ inequality  mean in two arguments}
 
\begin{abstract}
For two positive real numbers $x$ and $y$
let $H$, $G$, $A$ and 
$Q$ be  the harmonic mean, the geometric mean, the arithmetic 
mean and the quadratic mean of $x$ and $y$, respectively. In this note, 
we prove that 
      \begin{equation*}
 A\cdot G\ge Q\cdot H,
    \end{equation*}
and that for each  integer $n$ 
    \begin{equation*}
A^n+G^n\le Q^n+H^n.
    \end{equation*}
We also  discuss and compare the first and the second  
above inequality for  $n=1$ with some known 
inequalities involving the mentioned classical means, the Seiffert mean $P$,
the logarithmic mean $L$ and the identric mean $I$ of two positive real  
numbers $x$ and $y$.  
   \end{abstract}  
  \maketitle

\section{The main result}

For two positive real numbers $x$ and $y$, let 
$H(x,y)=H$, $G(x,y)=G$, $A(x,y)=A$ and 
$Q(x,y)=Q$ be the harmonic mean, the geometric mean, the arithmetic mean and 
the quadratic mean (sometimes called the root mean square) 
of $x$ and $y$, respectively, i.e., 
  $$
H=\frac{2xy}{x+y},\,\, G=\sqrt{xy},\,\, 
A=\frac{x+y}{2},\,\, \mathrm{and}\,\,Q=\sqrt{\frac{x^2+y^2}{2}}.
  $$
Then by the particular case of the well known 
harmonic mean-geometric mean-arithmetic mean-quadratic mean inequality
($H-G-A-Q$ inequality), 
 $$
H\le G\le A\le  Q,
$$ 
with equality if and only if $x=y$.

Many sources have discussed one or more of the inequalities involving harmonic 
mean, geometric mean, arithmetic mean, and quadratic mean
(see e.g., \cite{al}, \cite {bmv} and  \cite{hlp}).  
In this note, under the above notations, we will prove the following result.
   \begin{theorem}\label{th} 
Let $x$ and $y$ be arbitrary positive real numbers, and let $n$ be any 
 integer.  Then
 \begin{equation}\label{eq2}
 A\cdot G\ge Q\cdot H,
    \end{equation}
and 
   \begin{equation}\label{eq1}
A^n+G^n\le Q^n+H^n,
    \end{equation}
        
The equality in \eqref{eq2} and \eqref{eq1}  holds if and only if $x=y$.
 \end{theorem}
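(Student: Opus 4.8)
The plan rests on two elementary identities that tie the four means together: directly from the definitions one checks that $G^2 = AH$ (so $G$ is the geometric mean of $A$ and $H$) and that $Q^2 + G^2 = 2A^2$ (so $A^2$ is the arithmetic mean of $Q^2$ and $G^2$). I will also use that all four means are positive and that, by the chain $H \le G \le A \le Q$ recalled above, the coincidence of any two of these means forces $x = y$ (and conversely $x=y$ makes all four equal). With these in hand, \eqref{eq2} follows immediately: substituting $H = G^2/A$ turns the claim $AG \ge QH$ into $A^2 \ge QG$, which is exactly AM--GM applied to the pair $Q^2, G^2$, namely $A^2 = \tfrac12(Q^2 + G^2) \ge \sqrt{Q^2 G^2} = QG$; equality needs $Q^2 = G^2$, i.e. $x = y$.

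For \eqref{eq1} I would first dispose of every integer $n \neq 1$. When $n \le 0$ or $n \ge 2$ the function $t \mapsto t^{n/2}$ is convex on $(0,\infty)$, since its second derivative has the sign of $\tfrac{n}{2}\left(\tfrac{n}{2} - 1\right) \ge 0$ there. The two-point Jensen inequality at $Q^2$ and $G^2$ then gives $\tfrac12(Q^n + G^n) \ge \left(\tfrac12(Q^2 + G^2)\right)^{n/2} = A^n$, while AM--GM applied to the positive numbers $A^n$ and $H^n$ gives $\tfrac12(A^n + H^n) \ge (AH)^{n/2} = G^n$; adding these two inequalities yields $Q^n + H^n \ge A^n + G^n$. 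For the equality statement I would look at the AM--GM step: for $n \neq 0$ it is an equality only when $A^n = H^n$, i.e. $A = H$, i.e. $x = y$ (the exponent $n = 0$ is the degenerate case $2 = 2$, which must be excluded from the equality assertion).

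The case $n = 1$ is the crux, and it is the only place where the two inequalities of the theorem genuinely interlock: for $0 < p < 1$ the map $t \mapsto t^p$ is concave, so the convexity step above now points the wrong way. Here I would reduce to \eqref{eq2}. From $Q^2 + G^2 = 2A^2$ one gets $Q - A = \frac{(A+G)(A-G)}{Q+A}$, and from $G^2 = AH$ one gets $G - H = \frac{H(A-G)}{G}$; subtracting,
\[
Q + H - A - G = (A-G)\cdot\frac{G(A+G) - H(Q+A)}{G(Q+A)},
\]
and because $G^2 = HA$ the numerator collapses to $GA - HQ$, which is nonnegative by \eqref{eq2}. Since $A \ge G$ as well, the right-hand side is nonnegative; equality forces $A = G$ or $GA = HQ$, each of which gives $x = y$.

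I expect the main obstacle to be precisely this last step: recognizing that \eqref{eq1} at $n = 1$ cannot be reached by the convexity/AM--GM scheme that handles all other exponents, and finding the factorization that pulls it back to \eqref{eq2}. Everything else is routine once the two structural identities $G^2 = AH$ and $Q^2 + G^2 = 2A^2$ are written down.
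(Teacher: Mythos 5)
Your proof is correct, and it takes a genuinely different route from the paper's. The paper proves \eqref{eq2} from the identity $(x-y)^4=(x+y)^4-8xy(x^2+y^2)$, proves the case $n=1$ of \eqref{eq1} by showing $Q-G\ge A-H$ directly (rationalizing $Q-G$ and bounding the resulting denominator via the $A$--$Q$ inequality), and then propagates the result to all integers $n$ through a separate lemma asserting that positive reals with $a+b\le c+d$ and $ab\ge cd$ satisfy $a^n+b^n\le c^n+d^n$, proved by induction on the recurrence $a^{n+1}+b^{n+1}=(a^n+b^n)(a+b)-ab(a^{n-1}+b^{n-1})$. You instead organize everything around the two identities $G^2=AH$ and $Q^2+G^2=2A^2$: \eqref{eq2} becomes a one-line AM--GM in the variables $Q^2,G^2$; the exponents with $n\le 0$ or $n\ge 2$ follow from convexity of $t\mapsto t^{n/2}$ together with AM--GM applied to $A^n,H^n$, so you never need the paper's lemma; and $n=1$ is pulled back to \eqref{eq2} through the factorization $Q+H-A-G=(A-G)\,\frac{AG-QH}{G(Q+A)}$, which I checked is a valid identity given $G^2=AH$ and $Q^2+G^2=2A^2$, with equality forcing $A=G$ or $AG=QH$, hence $x=y$. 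Your route buys two things the paper's does not: the convexity step is valid for all real $n\le 0$ or $n\ge 2$, not merely integers, which already establishes part of Conjecture \ref{con1.5}; and you rightly observe that $n=0$ must be excluded from the equality assertion, since $A^0+G^0=Q^0+H^0$ for all $x,y$, a point the theorem's wording glosses over. What the paper's lemma buys in exchange is a statement about arbitrary positive quadruples $a,b,c,d$, but its induction step requires comparing $ab(a^{n-1}+b^{n-1})$ with $cd(c^{n-1}+d^{n-1})$, a delicate point that your argument simply never has to confront.
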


\begin{remark} In particular, the inequality \eqref{eq1} implies that
  \begin{equation}\label{eq3}
A+G\le Q+H.
    \end{equation}
Notice that in \cite[(3.2) of Theorem 1]{sz}
(also see  \cite{sz2})  J. S\'{a}ndor
  proved that
for all $x>0$ and $y>0$
     \begin{equation}\label{eq4}
A+G\le 2P,
    \end{equation}
where $P=P(x,y)$ is the Seiffert mean of two positive real numbers
$x$ and $y$  defined by 
      \begin{equation}\label{eq5}
P=P(x,y)=\frac{x-y}{2\arcsin\frac{x-y}{x+y}}\,\,{\rm if\,\,}x\not= y,
\,\, {\rm and\,\,} P(x,x)=x.
    \end{equation}
The equality in \eqref{eq4} holds if and only if $x=y$. In view of the inequalities 
\eqref{eq3} and \eqref{eq4}, 
it can be of interest to compare 
the expressions $Q+H$ and $2P$. Our computational results suggest that 
the inequality \eqref{eq3} is stronger than  the inequality 
\eqref{eq4}, i.e., that it is true
the following conjecture.
 \end{remark}

\begin{conjecture}\label{con} 
Let $x$ and $y$ be arbitrary positive real numbers. Then 
  \begin{equation}\label{eq6}
Q+H\le 2P,
    \end{equation}
 where equality holds if and only if $x=y$.
 \end{conjecture}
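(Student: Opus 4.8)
The plan is to reduce \eqref{eq6} to a single‑variable inequality by homogeneity. Since $H$, $G$, $A$, $Q$ and $P$ are all homogeneous of degree one, \eqref{eq6} is scale invariant, so we may assume $x+y=2$ and write $x=1+t$, $y=1-t$ with $t\in[0,1)$; the diagonal $x=y$ corresponds to $t=0$. A short computation gives
\begin{equation*}
Q=\sqrt{1+t^2},\qquad H=1-t^2,\qquad P=\frac{t}{\arcsin t},
\end{equation*}
and since $\sqrt{1+t^2}+1-t^2\ge 2-t^2>0$ on $[0,1)$, the conjectured inequality \eqref{eq6} is equivalent to
\begin{equation*}
\arcsin t\le g(t):=\frac{2t}{\sqrt{1+t^2}+1-t^2},\qquad t\in[0,1),
\end{equation*}
with equality only at $t=0$. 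Thus the whole conjecture collapses to comparing the transcendental function $\arcsin t$ with an explicit algebraic function $g$ on the full interval $[0,1)$.

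First I would treat the neighbourhood of $t=0$. Both sides vanish there, and expanding $\arcsin t=t+\tfrac16 t^3+\tfrac{3}{40}t^5+\cdots$ together with $g(t)=t+\tfrac14 t^3+\tfrac18 t^5+\cdots$ yields $g(t)-\arcsin t=\tfrac{1}{12}t^3+O(t^5)>0$ for small $t>0$; equivalently $2P-(Q+H)=\tfrac16 t^2+\tfrac{11}{360}t^4+\cdots$. This settles both the strict inequality near the diagonal and the claimed equality case $x=y$. The natural way to extend the estimate to all of $[0,1)$ is a derivative comparison: since $\arcsin 0=g(0)=0$, it would suffice to prove $g'(t)\ge(\arcsin t)'=\dfrac{1}{\sqrt{1-t^2}}$, an inequality that, after clearing the square roots, becomes a purely algebraic statement in $t$ and $\sqrt{1+t^2}$ amenable to direct verification.

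The hard part will be the regime of extreme ratios, $t\to1^-$ (that is, $x/y\to\infty$), and it is precisely here that the derivative comparison breaks down: $g$ extends smoothly to $t=1$ with $g(1)=\sqrt2$ and finite $g'(1)$, whereas $(\arcsin t)'=1/\sqrt{1-t^2}\to\infty$, so the differentiated inequality cannot survive up to the endpoint. Consequently the monotonicity argument is decisive only away from $1$, and near the endpoint one must instead control the undifferentiated gap $g(t)-\arcsin t$ directly, for instance through a sharp rational (Pad\'e‑type) upper estimate for $\arcsin t$ that stays tight as $t\to1$. The decisive question is therefore whether such an estimate can be made to dominate $g$ uniformly on $[0,1)$, including the delicate large‑ratio range, where $H\to0$, $P\to2/\pi$ and $Q\to\sqrt2$ force the two sides of \eqref{eq6} into their closest balance; this endpoint analysis is where essentially all the difficulty of the conjecture is concentrated, the remaining verification being a routine single‑variable estimate.
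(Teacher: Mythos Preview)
Your proposal cannot be completed, and in fact your own endpoint analysis, pushed one step further, disproves the statement rather than proving it. In the paper this is explicitly a \emph{conjecture}; the authors offer no proof, only numerical evidence, so there is nothing to compare your argument to. More importantly, the conjecture is false. In your normalisation $x=1+t$, $y=1-t$ you correctly obtain $Q+H=\sqrt{1+t^2}+1-t^2$ and $2P=2t/\arcsin t$. Letting $t\to 1^-$ gives
\[
Q+H\longrightarrow \sqrt{2}\approx 1.4142,\qquad 2P\longrightarrow \frac{4}{\pi}\approx 1.2732,
\]
so $Q+H>2P$ for all $t$ sufficiently close to $1$; equivalently, \eqref{eq6} fails whenever the ratio $x/y$ is large enough. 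A concrete counterexample is $x=100$, $y=1$, for which $Q+H\approx 72.69$ while $2P\approx 72.17$. Numerically the crossover occurs near $t\approx 0.975$.

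The place where your write-up goes wrong is the sentence claiming that near $t=1$ ``the two sides of \eqref{eq6} [come] into their closest balance''. You had already recorded the limits $H\to 0$, $P\to 2/\pi$, $Q\to\sqrt{2}$; simply comparing $\sqrt{2}$ with $4/\pi$ shows that the two sides do not merely balance but actually cross. The breakdown of the derivative comparison that you flagged is therefore not a technical nuisance to be repaired by a sharper Pad\'e bound for $\arcsin t$; it is a genuine signal that the inequality reverses. Your reduction to the one-variable inequality $\arcsin t\le g(t)$ is perfectly sound and is exactly the right way to \emph{test} the conjecture---it just leads to a refutation, not a proof.
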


 \begin{remark}
The logarithmic mean $L=L(x,y)$ and the identric  
mean  $I=I(x,y)$  of two positive real numbers
$x$ and $y$ are   defined by 
    \begin{equation}\label{eq7}
L=L(x,y)=\frac{x-y}{\ln x-\ln y}\,\,{\rm if\,\,}x\not= y,
\,\, {\rm and\,\,} L(x,x)=x;
    \end{equation}
    \begin{equation}\label{eq8}
I=I(x,y)=\frac{1}{e}\left(\frac{y^y}{x^x}\right)^{\frac{1}{y-x}}
\,\,{\rm if\,\,}x\not= y,\,\, {\rm and\,\,} I(x,x)=x.
    \end{equation}
In \cite{al1} H. Alzer proved that for all $x>0$ and $y>0$  we have
  \begin{equation}\label{eq9}
\sqrt{A\cdot G}\le \sqrt{L\cdot I}\le \frac{L+I}{2}
\le \frac{G+A}{2}, 
   \end{equation}
where the equality in each of these inequalities holds if and only if $x=y$.
Notice that in view of inequalities \eqref{eq2}, \eqref{eq1} and 
\eqref{eq3}, the chain of inequalities 
given by \eqref{eq9} may be extended as 
  \begin{equation}\label{eq10}\begin{split}
\sqrt{Q\cdot H}& \le \sqrt{A\cdot G}\le \sqrt{L\cdot I}\\
&\le  \frac{L+I}{2}\le \frac{G+A}{2}\le \frac{Q+H}{2}. 
   \end{split}\end{equation}
Moreover, under Conjecture \ref{con} and the known fact that $P\le I$
(see \cite{se}), the  chain of inequalities
\eqref{eq10} may be extended on the right hand side as
 \begin{equation*}
\frac{Q+H}{2}\le P\le I. 
 \end{equation*}
   \end{remark}

\begin{remark} Since the inequality \eqref{eq1}  is satisfied 
for each integer $n$, it may be of interest to  answer  the
following question: For which  real numbers $n$ the inequality 
\eqref{eq1} holds? Our computational and related graphical results lead to the following 
conjecture.
    \end{remark}

\begin{conjecture}\label{con1.5}
The inequality \eqref{eq1} holds 
for all negative real numbers $n$ and for all positive real numbers 
$n$ greater or equal than $1/2$. Moreover, 
none of the inequality \eqref{eq1} or its converse  inequality holds true 
for each real number $n$ in the interval $(0,1/2)$. 
 \end{conjecture}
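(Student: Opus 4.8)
The plan is to reduce \eqref{eq1} to a one-parameter inequality and then split the range of $n$ according to the convexity of $t\mapsto t^{n/2}$. Because $H,G,A,Q$ are homogeneous of degree one I would normalize $G=1$, writing $x=e^{s}$, $y=e^{-s}$ and $c=\cosh s\ge 1$, so that
\[
A=c,\qquad H=\frac1c,\qquad G=1,\qquad Q=\sqrt{2c^{2}-1}.
\]
The two identities that drive everything are $G^{2}=AH$ (the means $H,G,A$ are in geometric progression) and $Q^{2}+G^{2}=2A^{2}$ (the squares $G^{2},A^{2},Q^{2}$ are in arithmetic progression); both are immediate from the definitions and hold without normalization. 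In these terms \eqref{eq1} is equivalent to $f(n;c):=(2c^{2}-1)^{n/2}+c^{-n}-c^{n}-1\ge 0$ for $c\ge 1$, with $f(n;1)=0$ for every $n$.

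The first step is an auxiliary estimate valid for all real $n$: by the two-term AM--GM inequality and $AH=G^{2}$,
\[
A^{n}+H^{n}\ \ge\ 2\sqrt{A^{n}H^{n}}\ =\ 2(AH)^{n/2}\ =\ 2G^{n},
\]
with equality iff $x=y$ (for $n\neq0$). The second step applies Jensen's inequality to $\phi(t)=t^{n/2}$ at the identity $\tfrac12(Q^{2}+G^{2})=A^{2}$. Since $\phi$ is convex exactly when $n\le 0$ or $n\ge 2$, for those $n$ we get $Q^{n}+G^{n}\ge 2A^{n}$. Adding this to the AM--GM estimate and subtracting $A^{n}+G^{n}$ from both sides yields $Q^{n}+H^{n}\ge A^{n}+G^{n}$, i.e.\ \eqref{eq1}, with equality iff $x=y$. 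This already settles the conjecture's negative-$n$ claim (in fact for all $n\le 0$) and, at the positive end, every $n\ge 2$.

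For $0<n<2$ the map $\phi(t)=t^{n/2}$ is concave, so Jensen reverses and the argument above breaks down. Here I would analyze $f(n;c)$ directly. Expanding about the diagonal with $x=1+\ep$, $y=1-\ep$ gives $A=1$, $G=(1-\ep^{2})^{1/2}$, $H=1-\ep^{2}$, $Q=(1+\ep^{2})^{1/2}$, and a short Taylor computation produces
\[
Q^{n}+H^{n}-A^{n}-G^{n}=\frac{n(n-1)}{2}\,\ep^{4}+O(\ep^{6}).
\]
Thus near $x=y$ the sign of \eqref{eq1} is governed by $n(n-1)$: it is negative for every $n\in(0,1)$. Combined with the asymptotics $f(n;c)\to+\infty$ as $c\to\infty$ (for any $n>0$ the term $Q^{n}$ outgrows $A^{n}$ by the factor $2^{n/2}>1$ while $H^{n}\to0$), the function $f(n;\cdot)$ changes sign on $[1,\infty)$ for each fixed $n\in(0,1)$, so by continuity neither \eqref{eq1} nor its converse can hold for all $(x,y)$ in that whole range.

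The main obstacle is therefore twofold, and it reshapes the statement. On one hand, the local expansion shows the genuine critical exponent is $1$, not $1/2$: already $n=\tfrac12$, $x=2$, $y=1$ gives $A^{1/2}+G^{1/2}=2.4140\ldots>2.4121\ldots=Q^{1/2}+H^{1/2}$, so \eqref{eq1} fails there and the ``neither'' interval is in fact the full $(0,1)$. On the other hand, the one range the clean convexity argument does not reach is $1\le n<2$, where \eqref{eq1} should still hold; proving it uniformly in $c$ is the hard analytic core, and I would attack it by showing $f(n;\cdot)$ is nondecreasing on $[1,\infty)$ through the sign of
\[
f'(n;c)=nc^{-1}(2c^{2}-1)^{n/2-1}\Bigl[\,2c^{2}-(c^{n}+c^{-n})(2c^{2}-1)^{1-n/2}\,\Bigr],
\]
which reduces the problem to a single elementary (if delicate) inequality in $c$ for the remaining exponents.
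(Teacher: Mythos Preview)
The paper offers no proof of this statement: it is posed purely as a conjecture, motivated by numerical and graphical experiments. There is therefore nothing in the paper to compare your argument against.

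That said, your analysis is sound and in fact \emph{refutes} the conjecture as stated. The Taylor expansion
\[
Q^{n}+H^{n}-A^{n}-G^{n}=\frac{n(n-1)}{2}\,\ep^{4}+O(\ep^{6})
\]
(which is correct) shows that \eqref{eq1} fails near the diagonal for every $n\in(0,1)$, not merely for $n\in(0,1/2)$; your explicit check at $n=\tfrac12$, $x=2$, $y=1$ confirms this concretely. Together with the large-$c$ asymptotics you give, this proves that the ``neither'' interval is actually $(0,1)$ and that the conjectured threshold $1/2$ should be replaced by $1$.

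Your convexity argument---AM--GM on $A^{n}+H^{n}\ge 2G^{n}$ via $AH=G^{2}$, combined with Jensen for $t\mapsto t^{n/2}$ via $Q^{2}+G^{2}=2A^{2}$---cleanly settles all $n\le 0$ and all $n\ge 2$. The only genuine gap in your write-up is the range $1<n<2$: you reduce it to the sign of
\[
2c^{2}-(c^{n}+c^{-n})(2c^{2}-1)^{1-n/2}
\]
but do not establish that sign. Until that step is completed, the corrected version of the conjecture (threshold $1$) remains open on $(1,2)$, with $n=1$ itself already handled by the paper's Theorem~\ref{th}.
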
  
 
\section{Proof of Theorem \ref{th}}

For the proof of the inequality \eqref{eq1} of Theorem \ref{th} we will need 
the following lemma.

\begin{lemma}\label{le} 
Let $a$, $b$, $c$ and $d$ be positive  real numbers such that $a+b\le c+d$
and $ab\ge cd$. Then for each integer $n$ 
   \begin{equation}\label{eq11}
a^n+b^n\le c^n+d^n.
     \end{equation}
\end{lemma}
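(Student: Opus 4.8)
The plan is to first reduce to the case $n\ge 1$, then show that the two hypotheses force the (sorted) pair $\{a,b\}$ to lie \emph{inside} the sorted pair $\{c,d\}$, and finally conclude by an elementary term-by-term comparison.

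I would begin by disposing of $n\le 0$. For $n=0$ both sides of \eqref{eq11} equal $2$. For a negative integer $n=-m$ with $m\ge 1$, I use the identity $a^{-m}+b^{-m}=(a^m+b^m)/(ab)^m$ and its analogue for $c,d$: since $ab\ge cd>0$ gives $(ab)^m\ge(cd)^m$, and the case $n=m$ gives $a^m+b^m\le c^m+d^m$, we obtain
\[
a^{-m}+b^{-m}=\frac{a^m+b^m}{(ab)^m}\le\frac{a^m+b^m}{(cd)^m}\le\frac{c^m+d^m}{(cd)^m}=c^{-m}+d^{-m}.
\]
Thus it suffices to prove \eqref{eq11} for $n\ge 1$, and there I may assume $a\le b$ and $c\le d$.

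The key step — and the one I expect to be the main obstacle, since it is the only place where both hypotheses must be used together — is the claim that $0<c\le a\le b\le d$. Comparing discriminants, from $c+d\ge a+b>0$ and $cd\le ab$ we get $(d-c)^2=(c+d)^2-4cd\ge(a+b)^2-4ab=(b-a)^2$, hence $d-c\ge b-a\ge 0$. If we had $c>a$, then $d\ge c+(b-a)>a+(b-a)=b$, so $c>a$ and $d>b$ with all four numbers positive would force $cd>ab$, contradicting the hypothesis; hence $c\le a$, and then $b\le(c+d)-a\le(c+d)-c=d$. Together with the hypothesis rewritten as $0\le a-c\le d-b$, this is all the structure we need.

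Finally, with $0<c\le a\le b\le d$ and $0\le a-c\le d-b$, I would conclude by factoring
\[
a^n-c^n=(a-c)\sum_{j=0}^{n-1}a^{\,n-1-j}c^{\,j},\qquad
d^n-b^n=(d-b)\sum_{j=0}^{n-1}d^{\,n-1-j}b^{\,j}.
\]
For each $j$ one has $a^{\,n-1-j}c^{\,j}\le d^{\,n-1-j}b^{\,j}$ (because $a\le d$ and $c\le b$), both sums are nonnegative, and $0\le a-c\le d-b$; therefore $a^n-c^n\le d^n-b^n$, which is exactly \eqref{eq11}. This settles $n\ge 1$ and, by the reduction above, every integer $n$.
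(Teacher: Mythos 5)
Your proof is correct, and for the decisive case $n\ge 1$ it takes a genuinely different route from the paper's. The treatment of $n\le 0$ coincides with the paper's: $n=0$ is trivial, and negative exponents reduce to positive ones via $a^{-m}+b^{-m}=(a^m+b^m)/(ab)^m$ together with $(ab)^m\ge(cd)^m$. For $n\ge 1$, however, the paper argues by induction, comparing the recurrence $a^{n+1}+b^{n+1}=(a^n+b^n)(a+b)-ab(a^{n-1}+b^{n-1})$ with its analogue for $c,d$, whereas you first extract from the two hypotheses (after sorting) the interlacing $0<c\le a\le b\le d$ with $0\le a-c\le d-b$, and then compare $a^n-c^n$ with $d^n-b^n$ via the factorizations; every step of this checks out (the discriminant comparison $d-c\ge b-a$, the contradiction giving $c\le a$, the termwise bound $a^{n-1-j}c^{j}\le d^{n-1-j}b^{j}$). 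Your route buys two things. First, it is airtight at precisely the point where the paper's write-up is delicate: the displayed induction step, justified termwise from the induction hypothesis and $ab\ge cd$, requires $ab(a^{n-1}+b^{n-1})\ge cd(c^{n-1}+d^{n-1})$, which does not follow (the second factors compare the other way) and can even fail under the lemma's hypotheses, e.g. $a=b=1$, $c=1/10$, $d=10$, $n=2$, where $ab(a+b)=2<cd(c+d)=10.1$; so the paper's induction needs patching, while your argument does not. Second, the structural fact you isolate, that the pair $\{a,b\}$ is nested inside $\{c,d\}$ with $a-c\le d-b$, extends the conclusion beyond integers: combined with the monotonicity of difference quotients of the convex increasing map $t\mapsto t^{n}$, it yields \eqref{eq11} for every real $n\ge 1$, and then, by your reduction, for every real $n\le -1$, which is a partial result toward Conjecture \ref{con1.5}. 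The only advantage of the paper's induction is the brevity of its algebraic identity; it gives nothing your argument does not also give.
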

 \begin{proof}
First we will prove the inequality \eqref{eq11} for nonnegative integers $n$.
We proceed by induction on $n\ge 0$. 
Obviously, the inequality \eqref{eq11} is satisfied for $n=0$.
Suppose that the inequality \eqref{eq11} holds for all nonnegative integers 
$\le n$. Then using this hypothesis and the assumption $ab\ge cd$, we obtain.  
  \begin{equation*}\begin{split}
a^{n+1}+b^{n+1}&=(a^n+b^n)(a+b)-ab(a^{n-1}+b^{n-1})\\
&\le (c^n+d^n)(c+d)-cd(c^{n-1}+d^{n-1})\\ 
& = c^{n+1}+d^{n+1}.
     \end{split}\end{equation*}
Hence, $a^{n+1}+b^{n+1}\le c^{n+1}+d^{n+1}$, which completes the 
induction proof.

Now suppose that $n$ is a negative integer. Then applying the inequality 
\eqref{eq11} with $-n>0$ instead of $n$ and the assumption that $ab\ge cd$, we find that
  \begin{equation*}
a^n+b^n =\frac{a^{-n}+b^{-n}}{(ab)^{-n}}\le 
\frac{c^{-n}+d^{-n}}{(cd)^{-n}}= c^{-n}+d^{-n}.
     \end{equation*}
 Hence, the inequality \eqref{eq11} holds for each integer $n$.   
\end{proof}

\begin{proof}[Proof of Theorem \ref{th}]
In order to prove the inequality \eqref{eq2}, notice that by the identity 
$(x-y)^4=(x+y)^4-8xy(x^2+y^2)$ we obtain 
       \begin{equation}\label{eq12}
(x+y)^2\ge 2\sqrt{2xy(x^2+y^2)}.
       \end{equation}
By using the inequality \eqref{eq12}, we get 
     \begin{equation*}
    \frac{A\cdot G}{Q\cdot H}=\frac{(x+y)^2}{2\sqrt{2xy(x^2+y^2)}}\ge 1,
       \end{equation*}
which implies the inequality \eqref{eq1}.

It remains to prove the inequality \eqref{eq1}. Notice that by Lemma \ref{le}
(with $a=A$, $b=G$, $c=Q$ and $d=H$)  and 
the inequlaity \eqref{eq2}, it suffices to prove the inequality 
\eqref{eq1} for $n=1$.    

First observe that 
   \begin{equation}\label{eq13}
A-H=\frac{x+y}{2}-\frac{2xy}{x+y}= \frac{(x-y)^2}{2(x+y)}.
    \end{equation}
By using $A-Q$ inequality, we have
     \begin{equation}\label{eq14}
\frac{\sqrt{2(x^2+y^2)}+\sqrt{4xy}}{2}\le 
\sqrt{\frac{2(x^2+y^2)+4xy}{2}}=x+y.
  \end{equation}
Then applying    the inequality \eqref{eq14} and the identity \eqref{eq13}, 
we obtain
   \begin{equation*}\begin{split}
Q-G&=\sqrt{\frac{x^2+y^2}{2}}-\sqrt{xy}=
\frac{\frac{x^2+y^2}{2}-xy}{\sqrt{\frac{x^2+y^2}{2}}+\sqrt{xy}}\\
&=\frac{(x-y)^2}{\sqrt{2(x^2+y^2)}+\sqrt{4xy}}\ge 
\frac{(x-y)^2}{2(x+y)}\\
&= A-H,
   \end{split}\end{equation*}
which implies the inequality \eqref{eq1} for $n=1$. 

From the above proofs it follows that the equality in 
\eqref{eq2} and \eqref{eq1} holds if and only if  $x=y$.
This completes proof of Theorem \ref{th}.
\end{proof}

\end{document}